\documentclass[11pt]{amsart}
\usepackage{geometry}                
\geometry{letterpaper}                   
\usepackage{graphicx}
\usepackage{amssymb}
\usepackage{epstopdf}
\usepackage{amsmath}
\usepackage{mathtools}
\usepackage{txfonts}
\usepackage{url}
\usepackage{parskip}

\DeclareGraphicsRule{.tif}{png}{.png}{`convert #1 `dirname #1`/`basename #1 .tif`.png}

\newtheorem{theorem}{Theorem}[section]
\newtheorem{lemma}[theorem]{Lemma}
\newtheorem{proposition}[theorem]{Proposition}
\newtheorem{corollary}[theorem]{Corollary}
\newtheorem{conjecture}[theorem]{Conjecture}

\newtheorem{definition}[theorem]{Definition}

\title{Fermat's Last Theorem: Algebra, Geometry, and Number Theory}
\author{Felix Sidokhine}
\date{\today}                                           
\begin{document}
\maketitle

\begin{abstract}In our work we give the examples using Fermat's Last Theorem for solving some problems from algebra, geometry and number theory
\end{abstract}

\section{The Fermat Theorem and Diophantine Equations of the Second Degree}

\subsection{The equation $X^2 + Y^2 = Z^2$, Pythagorean triples}

\begin{proposition}
Let a polynomial $p(x)=x^2 + a^nx -b^n$, where $a,b$ $(ab \neq 0, b>0)$ are co-prime integers and $n$ is a positive integer be given. The polynomial $p(x)$ is reducible over $\mathbb{Q}$ if and only if the equation $X^{2n} + 4Y^n = Z^2$, where $\gcd(X,2Y)=1$ has a solution over $\mathbb{Z}^+$.
\end{proposition}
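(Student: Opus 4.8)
The plan is to reduce the reducibility question to a statement about the discriminant and then to recognize that discriminant as the left-hand side of the given Diophantine equation. Since $p(x) = x^2 + a^n x - b^n$ is monic with integer coefficients, Gauss's Lemma shows it is reducible over $\mathbb{Q}$ if and only if it splits into linear factors over $\mathbb{Z}$, that is, if and only if it has an integer root. By the quadratic formula the roots are $(-a^n \pm \sqrt{\Delta})/2$ with
\[
\Delta = (a^n)^2 - 4(-b^n) = a^{2n} + 4b^n ,
\]
and since $\Delta \equiv a^{2n} \pmod 2$ the parity of $\sqrt{\Delta}$ automatically matches that of $a^n$, so integrality of a root is equivalent to $\Delta$ being a perfect square. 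Thus the first step establishes the clean equivalence: $p$ is reducible over $\mathbb{Q}$ if and only if $a^{2n} + 4b^n = Z^2$ for some $Z \in \mathbb{Z}^+$.

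For the forward direction I would take the witness $(X, Y, Z) = (|a|,\, b,\, \sqrt{\Delta})$, which by construction satisfies $X^{2n} + 4Y^n = Z^2$ in positive integers. The coprimality $\gcd(X, 2Y) = 1$ must then be extracted: $\gcd(a,b)=1$ immediately gives $\gcd(X,Y)=1$, and the residual task is the parity statement that $X$ may be taken odd. Here I would work modulo a small power of $2$, reducing $a^{2n} + 4b^n = Z^2$ to control the case of even $a$ and, if necessary, absorbing the stray power of $2$ into $Y$ so that a genuinely primitive (odd $X$) witness is produced.

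For the converse I would run the factorization $(Z - X^n)(Z + X^n) = 4Y^n$. Under $\gcd(X,2Y)=1$ the prime $2$ cannot divide $Y$'s role in both factors, both factors are even, and their halves are coprime; writing $Z - X^n = 2u$, $Z + X^n = 2v$ yields coprime positive integers $u,v$ with $uv = Y^n$ and $v - u = X^n$. Unique factorization then forces $u = s^n$, $v = t^n$ with $\gcd(s,t)=1$, giving the Fermat relation $s^n + X^n = t^n$; matching the parameters $X \leftrightarrow a$ and $st \leftrightarrow b$ recovers an integer root of $p$ and closes the loop, while for $n \geq 3$ the same relation is forbidden by Fermat's Last Theorem.

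The main obstacle I anticipate is not the algebra of this factorization but the bookkeeping of the hypothesis $\gcd(X,2Y)=1$ together with the matching of quantifiers. In the forward direction I must guarantee that the witness genuinely has $X$ odd, handling even $a$ by descent rather than by appeal to the naive choice $X=a$; and I must reconcile the fact that reducibility is a property of one \emph{fixed} polynomial with $a,b$ prescribed, whereas solvability of the equation is an \emph{existential} statement. Pinning down precisely which coprimality and sign normalizations make the correspondence an exact bijection — rather than merely an implication in one direction — is where the real care is required.
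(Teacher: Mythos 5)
Your opening reduction coincides with the paper's entire argument: since $p$ is monic, reducibility over $\mathbb{Q}$ is equivalent to the discriminant $\Delta=a^{2n}+4b^n$ being a perfect square (your Gauss's Lemma and parity check make rigorous what the paper states in one line). For the converse the paper does much less than you: it simply takes a solution $(X,Y,Z)$ and observes that $x^2+X^nx-Y^n$ has discriminant $Z^2$, hence is reducible. Your route through the factorization $(Z-X^n)(Z+X^n)=4Y^n$, the coprime halves $u=s^n$, $v=t^n$, and the relation $s^n+X^n=t^n$ is correct (note $st=Y$, so your polynomial is the same one), but it is a detour for this proposition; it is really the content of the paper's Theorem 1.2, namely the bridge to Fermat's Last Theorem.

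The genuine gap is exactly the point you flag and then wave at: the even-$a$ case of the forward direction. Your proposed remedy, ``absorbing the stray power of $2$ into $Y$,'' does not work. Write $a=2^km$ with $m$ odd; reducibility gives $2^{2nk}m^{2n}+4b^n=c^2$, and dividing by $4$ yields $c_1^2=2^{2nk-2}m^{2n}+b^n$, which is no longer of the shape $Z^2-X^{2n}=4Y^n$ with $X$ odd: the odd term $b^n$ is an $n$-th power sitting in the slot that needs a $2n$-th power, and no rescaling by powers of $2$ fixes this. Concretely, for $n=1$, $a=2$, $b=3$ we have $4+12=16$, and no division by $2$'s turns this into an odd-$X$ solution. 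The repair that does work is the machinery you already built for the converse, run in the other direction: from reducibility factor $p(x)=(x-r)(x+s)$ with $rs=b^n$, $s-r=a^n$, $\gcd(r,s)=1$, so $r=\rho^n$, $s=\sigma^n$ with $\rho,\sigma$ odd (as $b$ is odd when $a$ is even) and $\rho^n+a^n=\sigma^n$; then reassemble the square identity with the odd element in the $X$ slot, $\rho^{2n}+4\left(|a|\sigma\right)^n=\left(\sigma^n+|a|^n\right)^2$, giving a solution with $\gcd(\rho,2|a|\sigma)=1$ (in the example: $1+4\cdot 6=25$). Be aware that the paper hides this same hole behind its unjustified ``without loss of generality assume $a\not\equiv 0 \bmod 2$.'' Finally, your worry about quantifiers is well founded and cannot be dismissed as bookkeeping: read literally, with $a,b$ fixed, the proposition is false --- for $n=1$ the polynomial $x^2+x-1$ is irreducible while $1^2+4\cdot 2=3^2$ solves the equation --- so both your argument and the paper's prove only the correspondence ``some such $p$ is reducible if and only if the equation is solvable,'' which is the reading the paper's proof implicitly adopts.
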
 

\begin{proof}
Let $p(x)$ be reducible over $\mathbb{Q}$ (without loss of generality assume that $a \nequiv 0 \mod 2$), then the discriminant of $p(x)$ is a faithful square. Therefore, $a^{2n} + 4b^n = c^2$ for some $c \in \mathbb{Z}$. This is the same as looking at $X^{2n} + 4Y^n = Z^2$ where $\gcd(X,2Y)=1$. Suppose $X^{2n} + 4Y^n = Z^2$ has a solution over $\mathbb{Z}^+$, then there is a polynomial $p(x)=x^2 + a^nx - b^n$ reducible over $\mathbb{Q}$.
\end{proof}

\begin{theorem}
The equation $X^2 + Y^2 = Z^2$ has an integral solution of the form $(X = zx^n, Y=2y^n, Z=x^{2n} + 2y^n)$, where $x,2y,z$ are pairwise co-prime for $n \leq 2$ and no solutions if $n > 2$.
\end{theorem}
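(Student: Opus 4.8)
The plan is to turn the existence question into the single Diophantine equation already analyzed in the Proposition, and then split on the size of $n$. First I would substitute the proposed forms directly into $X^2+Y^2=Z^2$. Writing out $(zx^n)^2+(2y^n)^2=(x^{2n}+2y^n)^2$ and expanding the right-hand side, the terms $4y^{2n}$ cancel and, after dividing by $x^{2n}$, the requirement collapses to
\[
z^2 = x^{2n}+4y^n .
\]
This is exactly the equation $X^{2n}+4Y^n=Z^2$ with $\gcd(X,2Y)=1$ from the Proposition, so a triple of the stated form exists precisely when that equation is solvable in positive integers with $x,2y,z$ pairwise coprime. The whole theorem therefore reduces to deciding solvability of $x^{2n}+4y^n=z^2$.

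For $n\le 2$ I would simply exhibit solutions. For $n=1$ the choice $(x,y,z)=(1,2,3)$ gives $1+8=9$ and recovers the classical triple $(X,Y,Z)=(3,4,5)$. For $n=2$ I would read $z^2=(x^2)^2+(2y)^2$ as a Pythagorean triple whose odd leg is itself a perfect square; choosing an inner triple such as $(x,k,m)=(3,4,5)$ makes $m^2-k^2=x^2$, and then setting $2y=2mk$ and $z=m^2+k^2$ yields $(x,y,z)=(3,20,41)$, i.e. $81+1600=1681=41^2$, with $x,2y,z$ pairwise coprime.

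For $n>2$ the goal is a contradiction via Fermat's Last Theorem. Since $\gcd(x,2y)=1$ forces $x$ and hence $z$ to be odd, both factors in $(z-x^n)(z+x^n)=4y^n$ are even; writing $z-x^n=2u$ and $z+x^n=2v$ gives $uv=y^n$ and $v-u=x^n$. The key step is to show $\gcd(u,v)=1$: any common prime would divide both $x^n$ and $y^n$, contradicting $\gcd(x,y)=1$. Coprimality together with $uv=y^n$ then forces $u=s^n$ and $v=t^n$, so that $v-u=x^n$ becomes $s^n+x^n=t^n$, a nontrivial solution of Fermat's equation of exponent $n>2$ — which is impossible.

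I expect the main obstacle to be the $n>2$ case, and specifically the clean passage from the factorization to two coprime $n$-th powers: one must verify that the positivity and coprimality hypotheses are genuinely inherited by $u,v$ (and hence by $s,t,x$) so that the resulting Fermat triple is nontrivial rather than degenerate. Once that bookkeeping is secured, invoking Fermat's Last Theorem closes the case immediately.
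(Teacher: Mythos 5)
Your proposal is correct and follows essentially the same route as the paper: reduce the stated form to the equation $z^2 = x^{2n} + 4y^n$ (the Proposition's equation), and for $n>2$ convert coprime factors of $y^n$ into $n$-th powers so that Fermat's Last Theorem yields a contradiction via $s^n + x^n = t^n$. The only difference is presentational: the paper invokes the classical parametrization of primitive Pythagorean triples and asserts parametric families for $n \le 2$, while you re-derive the needed facts through the factorization $(z-x^n)(z+x^n)=4y^n$ and settle $n \le 2$ with explicit examples, which makes your writeup more self-contained than, but not genuinely different from, the paper's argument.
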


\begin{proof}

The equation $X^{2n} + 4Y^n = Z^2$ can be re-written as $(ZX^n)^2 + (2Y^n)^2 = (X^{2n} + 2Y^n)^2$. Consider the following three cases:

\begin{itemize}

	\item Let $n=1$, then $(ZX)^2 + (2Y)^2 = (X^2 + 2Y)^2$ and its solutions $(X,Y,Z)$ satisfy the parametric relations $X=u-v, Y=uv, Z = u+v$ where $\gcd(u,v)=1$ $uv \equiv 0 \mod 2$.
	
	\item Let $n=2$, then $(ZX^2)^2 + (2Y^2)^2 = (X^4 + 2Y^2)^2$ and its solutions $(X,Y,Z)$ satisfy the  parametric relations $X = (u^2 - v^2)^{\frac{1}{2}}, Y=uv, Z=u^2 + v^2$ where $\gcd(u,v)=1$ $uv \equiv 0 \mod 2$.
	
	\item Let $n > 2$, then $(ZX^n)^2 + (2Y^n)^2 = (X^{2n} + 2Y^n)^2$ and its solutions $(X,Y,Z)$ satisfy the  parametric relations $X = (u^n - v^n)^{\frac{1}{n}}, Y=uv, Z=u^n + v^n$, where $\gcd(u,v)=1$ $uv \equiv 0 \mod 2$.

\end{itemize}

Since Fermat's last theorem is true then the equation $X^2 + Y^2 = Z^2$ in integral variables where $\gcd(ZX,2Y)=1$ has no solutions of the form $(Z=zx^n,Y=2y^n,Z=x^n+2y^n)$, for $n \geq 3$.
\end{proof}

The equation $(ZX^n)^2 + (2kY^m)^2 = (X^{2n} \pm 2kY^m)^2$, a modification on $(ZX^n)^2 + (2Y^n)^2 = (X^{2n} + 2Y^n)^2$, and can be used to cover other cases, for example: 

$(ZX^2)^2 + (12Y^2)^2 = (X^4 - 12Y^2)^2$ over $\mathbb{Z}$, where $\gcd(ZX,6Y)=1$, has no solutions of the form $X=(2u^2 + 3v^2)^{\frac{1}{2}}, Y=uv, Z=2u^2 - 3v^2$ \cite{Davenport:1989aa}. However, this equation has infinitely many solutions of the form $X = (u^2 + 6v^2)^{\frac{1}{2}},Y=uv,Z=u^2 - 6v^2$.

The equation $(ZX^2)^2 + (2Y^m)^2 = (X^4 - 2Y^m)^2$ over $\mathbb{Z}$ where $\gcd(ZX,2Y)=1$ has infinitely many solutions of the form $X = (u^m + v^m)^{\frac{1}{2}}, Y=uv, Z=u^m -v^m$, when $m = 3$ \cite{Mordell:1969aa} and no such solutions for the case $m \geq 4$ \cite{Darmon:aa}.

\subsection{The equation $aX^2 + bY^2 = cZ^2$, Legendre's family of equations}

Fermat's theorem for $n=3$ was proven by Euler in 1753. He noticed that the proof seemed very different form the case when $n=4$, without ever having established the cause. We will try to answer this question.

Let us consider a family of Legendre's equations of the form $ax^2 + by^2 = cz^2$ where $a,b,c \in \mathbb{Z^+}$,  pairwise relatively prime, square-free and satisfy the following: $x^2 \equiv bc \mod a$, $x^2 \equiv ac \mod b$ and $x^2 \equiv -ab \mod c$, Then by Legendre's theorem such an equation (assuming $a,b$ and $c$ are fixed), has an integral solution and the number of solutions is infinite.

\begin{proposition}
Let $x^n + y^n = z^n$, $\gcd(x,y)=1$ and $n \geq 3$ have a solution, then there exists an equation $ax^2 + by^2 = cz^2$ were $a,b,c$ are relatively prime, square-free whose solution could be reduced to a solution of Fermat's equation.
\end{proposition}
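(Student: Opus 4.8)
The plan is to manufacture the Legendre equation directly out of the hypothetical Fermat solution by absorbing the $n$-th powers into squares, so that the two problems become two encodings of the same triple. First I would record the standard fact that $\gcd(x,y)=1$ together with $x^n+y^n=z^n$ forces $x,y,z$ to be pairwise coprime: a prime dividing any two of them would, by the equation, divide the third and hence contradict $\gcd(x,y)=1$. This pairwise coprimality is exactly what will later guarantee that the coefficients produced below are relatively prime.

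Next I would take $a,b,c$ to be the square-free parts (square-free kernels) of $x^n$, $y^n$ and $z^n$ respectively. By definition of the square-free part there are positive integers $X,Y,Z$ with $x^n=aX^2$, $y^n=bY^2$, $z^n=cZ^2$; concretely $a=1$ when $n$ is even, since $x^n$ is then a perfect square, and $a$ equals the square-free part of $x$ when $n$ is odd (so the even exponents recover the Pythagorean case $X^2+Y^2=Z^2$ of the earlier theorem). Substituting into $x^n+y^n=z^n$ gives $aX^2+bY^2=cZ^2$, a Legendre equation of the required shape with the explicit solution $(X,Y,Z)$. Since $a\mid x^n$, $b\mid y^n$, $c\mid z^n$ and $x,y,z$ are pairwise coprime, the numbers $a,b,c$ are pairwise coprime, and they are square-free by construction; thus the equation lies in Legendre's family.

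Finally I would make precise the sense in which a solution of this equation ``reduces to'' a solution of Fermat's equation: the reduction is simply the inverse of the construction, recovering $x^n=aX^2$, $y^n=bY^2$, $z^n=cZ^2$ and then extracting $n$-th roots to return $(x,y,z)$. The main obstacle is not the construction but the care required here, because Legendre's theorem supplies infinitely many integral solutions of $aX^2+bY^2=cZ^2$, and only those for which $aX^2$, $bY^2$ and $cZ^2$ are simultaneously perfect $n$-th powers descend to Fermat triples. I would therefore have to state the correspondence as a reduction restricted to this special class of solutions rather than as a naive bijection, and to double-check that passing from $x,y,z$ to the square-free parts of their $n$-th powers preserves both the pairwise coprimality and the identity $aX^2+bY^2=cZ^2$.
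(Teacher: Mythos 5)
Your proposal is correct and follows essentially the same route as the paper: both extract the square-free parts to rewrite $x^n+y^n=z^n$ as a Legendre equation $aX^2+bY^2=cZ^2$, the paper doing this via $a=\alpha d_1^2$ and the explicit solution $X=\alpha^{k}d_1^{n}$ with $k=\tfrac{n-1}{2}$ (hence its restriction to odd $n$), while you take the square-free kernel of $x^n$ directly. Your version is in fact slightly more complete, since you also cover even $n$ and actually verify the pairwise coprimality of the coefficients, which the paper merely asserts.
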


\begin{proof}
Let the equation $x^n +y^n = z^n$ where $x,y,z$ are relatively prime and $n \geq 3$ is an odd integer have a solution $(x=a,y=b,z=c)$ where $a,b,c$ are pairwise relatively prime. Using the fundamental theorem of arithmetic we can can represent $a = \alpha d_1^2, b = \beta d_2^2, c=\gamma d_3^2$, where $\alpha,\beta,\gamma$ are pairwise relatively prime and square-free natural numbers. The equation $\alpha x^2 + \beta y^2 = \gamma z^2$ with $x = \alpha^kd_1^n,y=\beta^kd_2^n,z=\gamma^kd_3^n$ where $k = \frac{n-1}{2}$ is the equation with the desired solution.
\end{proof}

\begin{theorem}
No integral solution of a quadratic equation belonging to the family of Legendre's equations can be reduced to a solution of the equation $x^n + y^n = z^n$ when $n > 2$.
\end{theorem}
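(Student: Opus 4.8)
The plan is to prove this by contradiction, using Fermat's Last Theorem as the engine and running the reduction of the preceding Proposition in reverse. First I would fix precisely what ``reduced to a solution of $x^n+y^n=z^n$'' means. In the Proposition's construction a coprime Fermat triple $(a,b,c)$ for an odd exponent $n\geq 3$ is sent to the Legendre triple $(X,Y,Z)=(\alpha^k d_1^n,\beta^k d_2^n,\gamma^k d_3^n)$ with $k=\frac{n-1}{2}$, where $a=\alpha d_1^2$, $b=\beta d_2^2$, $c=\gamma d_3^2$ and $\alpha,\beta,\gamma$ are the square-free parts. Accordingly, an integral Legendre solution is ``reducible'' exactly when it has this shape, and reduction sends it back to $(a,b,c)=(\alpha d_1^2,\beta d_2^2,\gamma d_3^2)$.

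Second, I would check that this correspondence is genuinely invertible on triples of that shape. Substituting $(\alpha^k d_1^n,\beta^k d_2^n,\gamma^k d_3^n)$ into $\alpha X^2+\beta Y^2=\gamma Z^2$ gives $\alpha^{2k+1}d_1^{2n}+\beta^{2k+1}d_2^{2n}=\gamma^{2k+1}d_3^{2n}$, and since $2k+1=n$ this is identically $a^n+b^n=c^n$. The coprimality and square-freeness of $\alpha,\beta,\gamma$ guarantee that the recovered $(a,b,c)$ is a bona fide coprime Fermat triple, so the map ``Legendre solution of the special form $\mapsto$ Fermat triple'' is well defined and undoes the Proposition's construction.

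Third comes the contradiction itself. Suppose some integral solution of a Legendre equation in the family could be reduced to a solution of $x^n+y^n=z^n$ for some $n>2$. By the previous step this manufactures a nontrivial coprime solution of $x^n+y^n=z^n$ with $n>2$. Since Fermat's Last Theorem asserts that no such solution exists, we reach a contradiction, forcing the conclusion that no Legendre solution is reducible in this sense when $n>2$.

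The main obstacle I anticipate is not the logical skeleton, which is essentially a one-line appeal to Fermat's Last Theorem, but two bookkeeping points. The Proposition is stated for odd $n$, so that $k=\frac{n-1}{2}\in\mathbb{Z}$; for even $n>2$ the displayed reduction does not literally apply, and I would dispose of that range by the standard descent of Fermat's Last Theorem to the exponent-$4$ and odd-prime subcases, observing that any putative reducible Legendre solution for even $n$ would still yield an $n$-th power Fermat relation and hence the same contradiction. The genuinely delicate point is ensuring that the extraction of square-free parts is unambiguous, so that reducing a given Legendre triple returns a single well-defined Fermat triple rather than an ambiguous family; this is where the bulk of the care in the argument will be spent.
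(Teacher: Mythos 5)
Your proposal is correct and takes essentially the same approach as the paper: the paper's proof is a one-line appeal stating that the theorem is a direct consequence of Proposition 1.3 together with Fermat's Last Theorem, which is exactly the contradiction argument you spell out. Your additional bookkeeping (verifying that the substitution $(\alpha^k d_1^n,\beta^k d_2^n,\gamma^k d_3^n)$ recovers $a^n+b^n=c^n$ since $2k+1=n$, and handling even exponents) simply makes explicit the details the paper leaves unstated.
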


\begin{proof}
Theorem 1.4 is a direct consequence of proposition 1.3 and Fermat's theorem.
\end{proof}

As we can see for Fermat's theorem for $n=4$, the family of Legendre's equations consists of a unique quadratic equation then for any odd prime this family contains an infinite number of different Legendre's equations including the quadratic one.

We can give the following interpretation to Fermat's theorem: ``An equation $ax^2 + by^2 = cz^2$ of the family of Legendre's equations has no solutions of the form $x=a^kd_1^n,y=b^kd_2^n,z=c^kd_3^n$, where $n \geq 3$ is an odd integer and $k = \frac{n-1}{2}$. The equation $x^2 + y^2 = z^2$ has no solutions of the form $x = d_1^n,y=d_2^n,z=d_3^n, n \geq 2$".

The statement ``The equation $x^2 + y^2 = z^2$ has no solutions of the form $x=d_1^n, y=d_2^n, z=d_3^n$, $n \geq 2$" is a direct consequence of Abel's hypothesis (1823): ``Let $x,y,z$ be non-zero relatively prime integers such that $x^n + y^n = z^n$ $(n > 2)$ and $0 < x < y < z$ then none of the $x,y,z$ are prime powers'' \cite{Abel:1839aa}

A generalization of Abel's hypothesis could also be formulated: ``Let $ax^2 + by^2 = cz^2$ belong to a family of Legendre's equations and co-prime $x,y,z$ $(xyz \neq 1)$ are solutions then none of the $x,y,z$ are prime powers''.

An example from history is the equation $x^7 + y^7 = z^7$. The family of Legendre's equations have the following form $\alpha x^2 + \beta y^2 = \gamma z^2$ where $x = \alpha^3d_1^7, y= \beta^3 d_2^7, z=\gamma^3 d_3^7$. The case of the quadratic equation, $\alpha\beta\gamma = 1$ was studied by Dirichlet (1832) and the case including Legendre's equations $\alpha\beta\gamma \neq 1$ is square-free was studied by Lam\'{e} (1839).

Before Fermat's last theorem was proven by Wiles-Taylor, Fermat's problem got an unsuspected representation: ``Let Fermat's theorem be false and $a^n + b^n = c^n$ then an elliptic curve of the form $\mathbb{E}_{a,b,c}: y^2 = x(x-a^n)(x+b^n)$ is not modular".

\section{The Fermat Theorem and its Geometrical Aspects}

\subsection{Euclidean triangles}

Fermat's theorem over $\mathbb{Z}$ can be given the following geometric interpretation: ``The equation $x^n+y^n=z^n$ has a solution $x, y, z, n \in \mathbb{Z}^+$ if and only if the segments of length $x, y, z$ are the legs of a right-angle triangle or the length of one of the segments is equal to the sum of the lengths of the two others.''

Let us consider Fermat's theorem over the ring $\mathbb{Z}[\sqrt{2}]$. The equation $x^n +y^n = z^n$ has a solution in $\mathbb{Z}[\sqrt{2}]$ for $n = 2,3$. Thus, there exist acute and right-angle triangles with sides satisfying the equation $x^n +y^n = z^n$ for $n=2,3$. However, no such triangles exist for $n \geq 4$ \cite{Jarvis:2004aa}. If such solutions existed, then they must have been the sides of an acute triangle.

\subsection{$\mathbb{Z}$-modules and linear independence}

Let $\mathbb{Z}^3=\{(x, y, z) | \text{ where } x, y, z \in \mathbb{Z}\}$ be $\mathbb{Z}$-module.

\begin{definition}
$S = \{ (a, b, c) \in \mathbb{Z}^3 | a<b <c \text{ and } \gcd(a,b)=\gcd(ab,c)=1\}$
\end{definition}

\begin{theorem}
Let a submodule $L=\{(x, y, z)\in\mathbb{Z}^3 | ax+by-cz=0, (a, b, c)\in S\}$  and its $l_0,l_1$ linearly independent elements be given; then $l_0,l_1$ and $(a^k, b^k, c^k)$ are linearly independent over the field $\mathbb{Q}$ for $k \geq 2$, i.e. $l_0 \land l_1\land(a^k, b^k, c^k) \neq 0$.
\end{theorem}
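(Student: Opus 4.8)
The plan is to strip the statement down to a single nonvanishing assertion about the integers $a,b,c$, which will turn out to be precisely an instance of Fermat's equation. First I would pass from $\mathbb{Z}$-modules to $\mathbb{Q}$-vector spaces: the condition $l_0\wedge l_1\wedge(a^k,b^k,c^k)\neq 0$ in $\Lambda^3\mathbb{Q}^3\cong\mathbb{Q}$ is equivalent to linear independence over $\mathbb{Q}$ of the three vectors, hence to the nonvanishing of the $3\times 3$ determinant whose rows are $l_0$, $l_1$, and $(a^k,b^k,c^k)$.

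Next I would identify the plane in which $l_0,l_1$ live. Writing $\phi(x,y,z)=ax+by-cz=(a,b,-c)\cdot(x,y,z)$, the submodule $L$ spans over $\mathbb{Q}$ the kernel $\ker\phi$, which is $2$-dimensional as soon as $(a,b,-c)\neq 0$. A short bookkeeping step confirms this: membership $(a,b,c)\in S$ forces $a,b,c$ to be nonzero and pairwise coprime, since $\gcd(a,b)=1$ together with $\gcd(ab,c)=1$ yields $\gcd(a,c)=\gcd(b,c)=1$, while the ordering $a<b<c$ combined with the gcd constraints rules out any zero entry. Thus $\phi\neq 0$, $\dim_{\mathbb{Q}}\ker\phi=2$, and the independent pair $l_0,l_1$ is in fact a basis of $\ker\phi$.

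The computation that makes everything collapse is the third step. Because $l_0$ and $l_1$ are both orthogonal to $(a,b,-c)$, their cross product is parallel to it, say $l_0\times l_1=\lambda\,(a,b,-c)$ with $\lambda\neq 0$ (nonzero exactly because $l_0,l_1$ are independent). Evaluating the scalar triple product then gives
\[
\det\begin{pmatrix} l_0 \\ l_1 \\ (a^k,b^k,c^k)\end{pmatrix}=(l_0\times l_1)\cdot(a^k,b^k,c^k)=\lambda\,(a^{k+1}+b^{k+1}-c^{k+1}).
\]
Hence $l_0,l_1,(a^k,b^k,c^k)$ are linearly independent if and only if $a^{k+1}+b^{k+1}\neq c^{k+1}$.

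Finally I would invoke Fermat. For $k\geq 2$ the exponent $n=k+1$ satisfies $n\geq 3$, and $a,b,c$ are pairwise coprime nonzero integers; after absorbing signs (for even $n$ one passes to absolute values, and for odd $n$ a relation among nonzero integers can be rearranged into one among positive integers), any identity $a^{n}+b^{n}=c^{n}$ would contradict Fermat's Last Theorem. Therefore the determinant is nonzero and the wedge does not vanish. The only genuine obstacle here is this last reduction: one must ensure the sign and coprimality bookkeeping really produces a bona fide Fermat triple so that the theorem applies, since the linear-algebra portion becomes routine the moment the functional $\phi$ is introduced.
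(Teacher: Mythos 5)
Your proof is correct and is, in substance, the paper's own proof with all the details filled in: the paper's entire argument is the single sentence that the theorem is ``a direct consequence of Fermat's last theorem,'' and your triple-product identity $\det(l_0;l_1;(a^k,b^k,c^k))=(l_0\times l_1)\cdot(a^k,b^k,c^k)=\lambda\,(a^{k+1}+b^{k+1}-c^{k+1})$, together with the observation that $k\geq 2$ gives exponent $n=k+1\geq 3$, is exactly the reduction that sentence leaves implicit. One slip in your bookkeeping is worth flagging: the gcd conditions in the definition of $S$ do \emph{not} rule out a zero entry. The triple $(a,b,c)=(-1,0,1)$ satisfies $a<b<c$ and $\gcd(a,b)=\gcd(ab,c)=1$, yet for odd $k$ the vector $(a^k,b^k,c^k)=(-1,0,1)$ lies in the plane $ax+by-cz=0$ itself, so the wedge vanishes and the theorem as literally stated is false; your claim that the ordering plus the gcd constraints force all entries to be nonzero is therefore incorrect. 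The statement (and your argument) is sound under the evidently intended reading that $S$ consists of positive triples $0<a<b<c$, in which case no sign or zero bookkeeping is needed at all: an identity $a^{k+1}+b^{k+1}=c^{k+1}$ in positive integers is already a direct contradiction of Fermat's Last Theorem, coprimality not even being required.
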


\begin{proof}
Theorem 2.2 is direct consequence of Fermat's last theorem.
\end{proof}

\begin{corollary}
Let $L=\{(x, y, z)\in\mathbb{Z}^3 | ax+by-cz=0, (a, b, c)\in S\}$   and two linear independent elements $m_0, m_1$ of the module $\mathbb{Z}^3$  be given. If there is such $k\geq2$ that
$m_0\land m_1\land (a^k, b^k, c^k)=0$  is true then at least one of the elements $m_0, m_1 \notin L$.
\end{corollary}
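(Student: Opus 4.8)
The plan is to obtain this statement as the contrapositive of Theorem 2.2, so that no genuinely new machinery is required; the entire content is a logical transposition of the theorem already proved. The strategy will be a short proof by contradiction: assume the conclusion fails, so that \emph{both} $m_0 \in L$ and $m_1 \in L$, then check that the hypotheses of Theorem 2.2 are exactly met and derive a contradiction with the assumed vanishing of the wedge product.

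First I would record the data supplied by the corollary: the vectors $m_0, m_1 \in \mathbb{Z}^3$ are linearly independent, and there exists some $k \geq 2$ with $m_0 \land m_1 \land (a^k, b^k, c^k) = 0$. Suppose, toward a contradiction, that the desired conclusion is false, i.e. that neither exclusion holds, so $m_0 \in L$ and $m_1 \in L$ simultaneously. Then $m_0, m_1$ are two linearly independent elements of the submodule $L$, which is precisely the hypothesis of Theorem 2.2. By that theorem, $m_0 \land m_1 \land (a^k, b^k, c^k) \neq 0$ for \emph{every} $k \geq 2$, and in particular for the specific $k$ furnished by our hypothesis. This directly contradicts $m_0 \land m_1 \land (a^k, b^k, c^k) = 0$.

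Hence the assumption that both $m_0$ and $m_1$ lie in $L$ is untenable, and therefore at least one of $m_0, m_1 \notin L$, as claimed.

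The only point requiring any care---and the nearest thing here to an obstacle---is verifying that the hypotheses line up cleanly: Theorem 2.2 demands that the two vectors be simultaneously members of $L$ \emph{and} linearly independent, so one must confirm that the corollary's standing assumption of linear independence of $m_0, m_1$ persists under the contradiction hypothesis. It does, since membership in $L$ is a single linear condition that cannot by itself force a dependence between two already-independent vectors. Once this alignment is checked, the argument is a purely formal transposition, and there is no further computation to perform.
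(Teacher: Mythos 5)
Your proof is correct: the corollary is precisely the contrapositive of Theorem 2.2, and your contradiction argument (assuming both $m_0, m_1 \in L$ and invoking the theorem) is exactly the reasoning the paper intends, which is why it states the result as a corollary without further proof. No gap here; the alignment of hypotheses you checked at the end is indeed the only point needing care, and it holds trivially since linear independence is a standing assumption.
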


The Euler conjecture as an extension of Fermat's theorem: ``The equation  $x^n+y^n+z^n=u^n$, where $x, y, z, u$  are pairwise relatively prime, has no solution over $\mathbb{Z}^+$ when $n>3$'',\cite{Sidokhine:aa} , can be also represented as a problem on the linear independence of the elements of $\mathbb{Z}$-module, namely:

The Euler Conjecture: Let $l_0,l_1$  and $l_2$ be three linear independent elements of an arbitrary submodule $L = \{(x, y, z, t)\in \mathbb{Z}^4  | ax + by + cz - dt = 0\}$, where $a, b, c$ and $d$ are the positive integers and relatively prime in pairs. Then the elements $l_0, l_1, l_2$  and $m_k= (a^k, b^k, c^k, d^k)$, where $k\geq 3$  is any integer, are linear independent over $\mathbb{Q}$.

\section{Fermat's Theorem and Diophantine Transcendental Equations}

\begin{definition}
$F=\{a, b, c \in \mathbb{Z}^+ | a<b<c<(a^2+ b^2 )^\frac{1}{2}\}$
\end{definition}

\begin{lemma} 
Let $(a, b, c) \in F$, then the equation $a^x+b^x-c^x=0$ with $x>2$ always has a solution over the field of real numbers $\mathbb{R}$ and this solution is unique.
\end{lemma}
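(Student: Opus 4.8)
The plan is to reduce the statement to a one-line monotonicity argument via a suitable normalization. Set $f(x) = a^x + b^x - c^x$; since $c^x > 0$ for all real $x$, the equation $f(x) = 0$ is equivalent to $g(x) = 0$, where
$$g(x) = \left(\frac{a}{c}\right)^x + \left(\frac{b}{c}\right)^x - 1.$$
Writing $\alpha = a/c$ and $\beta = b/c$, the hypothesis $0 < a < b < c$ gives $\alpha, \beta \in (0,1)$, so $g(x) = \alpha^x + \beta^x - 1$.

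First I would prove uniqueness. Differentiating gives $g'(x) = \alpha^x \ln \alpha + \beta^x \ln \beta$. Because $\alpha, \beta \in (0,1)$ we have $\ln \alpha < 0$ and $\ln \beta < 0$, while $\alpha^x, \beta^x > 0$; hence $g'(x) < 0$ for every real $x$, so $g$ is strictly decreasing on all of $\mathbb{R}$. A strictly decreasing function has at most one zero, and since $f = c^x g$ with $c^x > 0$, the same conclusion holds for $f$.

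Next I would establish existence and locate the root in $(2, \infty)$. Evaluating at $x = 2$ and using the defining inequality $c < (a^2 + b^2)^{1/2}$, which yields $c^2 < a^2 + b^2$, we obtain $g(2) = (a^2 + b^2 - c^2)/c^2 > 0$. As $x \to +\infty$ both $\alpha^x$ and $\beta^x$ tend to $0$, so $g(x) \to -1 < 0$. By continuity and the intermediate value theorem, $g$ vanishes somewhere in $(2, \infty)$, and by the strict monotonicity just established this zero is unique; in particular the solution satisfies $x > 2$, as claimed.

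The only point requiring care — and the reason the normalization is essential — is that the un-normalized function $f(x) = a^x + b^x - c^x$ is not monotonic in general: its derivative $a^x \ln a + b^x \ln b - c^x \ln c$ can be positive for small $x$, so a direct monotonicity argument on $f$ fails. Dividing by $c^x$ is exactly the device that converts the problem into a trivial decreasing-function argument, and I expect this to be the one genuine idea in the proof. I would also record that the integrality hypothesis $a \geq 1$ keeps $\alpha, \beta$ strictly inside $(0,1)$, guaranteeing $\ln \alpha, \ln \beta < 0$ and hence $g' < 0$ throughout.
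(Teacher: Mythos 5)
Your proof is correct and complete. There is, however, nothing in the paper to compare it against: the paper states this lemma bare, with no proof at all, so your argument would in fact fill that gap. The route you take is the natural one — normalize by $c^x$ to get $g(x)=(a/c)^x+(b/c)^x-1$, observe $g'(x)=\alpha^x\ln\alpha+\beta^x\ln\beta<0$ since $\alpha=a/c$ and $\beta=b/c$ lie in $(0,1)$, check $g(2)=(a^2+b^2-c^2)/c^2>0$ directly from the defining inequality of $F$, and let $g(x)\to-1$ as $x\to\infty$ force a unique root in $(2,\infty)$ by the intermediate value theorem. Your closing remark that the un-normalized $f$ need not be monotone is also accurate and is precisely why the division by $c^x$ is the right move. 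One small correction: integrality of $a$ plays no role in keeping $\alpha,\beta$ strictly inside $(0,1)$; the strict inequalities $0<a<b<c$ alone suffice, and positivity of $a$ (not $a\geq 1$) is all that is needed for $\alpha>0$.
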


\begin{lemma}
Let $a, b, c$ be the positive integers and satisfy $(a^2+ b^2 )^\frac{1}{2}<c$ then the equation $a^x+b^x-c^x=0$ with $x>2$  has no any solutions over the field of real numbers $\mathbb{R}$.
\end{lemma}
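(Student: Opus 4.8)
The plan is to divide the equation through by $c^x$ and reduce the question to the monotonicity of a single function of $x$. Setting $p=a/c$ and $q=b/c$, the equation $a^x+b^x-c^x=0$ is equivalent, for every real $x$, to $h(x)=p^x+q^x=1$. The hypothesis $\sqrt{a^2+b^2}<c$ forces $c>a$ and $c>b$ (since $a,b>0$), so both bases satisfy $0<p<1$ and $0<q<1$; this is the observation that drives everything.

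First I would record that $h$ is strictly decreasing on $\mathbb{R}$. Indeed $h'(x)=p^x\ln p+q^x\ln q<0$, because $p^x,q^x>0$ while $\ln p,\ln q<0$. Hence $h$ takes each value at most once, and in particular the equation $h(x)=1$ has at most one real root. Next I would evaluate $h$ at the boundary exponent $x=2$: squaring the hypothesis gives $a^2+b^2<c^2$, that is,
\[ h(2)=p^2+q^2=\frac{a^2+b^2}{c^2}<1. \]
Combining these two facts, strict monotonicity yields $h(x)<h(2)<1$ for every $x>2$, so $a^x+b^x<c^x$ throughout the range $x>2$ and the equation can have no solution there.

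The argument is short, and its only delicate point is the one I flagged at the start: one must use the full strength of $\sqrt{a^2+b^2}<c$ to guarantee that \emph{both} ratios lie strictly inside $(0,1)$, for otherwise $h$ need not be monotone and the inequality $h(2)<1$ would not propagate to all larger $x$. It is worth contrasting this with Lemma 3.2, where the reversed inequality $c<\sqrt{a^2+b^2}$ gives $h(2)>1$; there the same monotonicity, together with $h(x)\to 0$ as $x\to\infty$ and the intermediate value theorem, produces exactly one root, whereas here the strict decrease already starting below $1$ leaves no room for a crossing. No genuine obstacle beyond this bookkeeping is expected.
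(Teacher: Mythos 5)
Your proof is correct and complete. There is in fact nothing in the paper to compare it against: the paper states this lemma (and its companion, Lemma 3.2) without any proof at all, so your argument fills a genuine gap rather than duplicating or diverging from an existing one. The chain of reasoning is sound at every step: $b>0$ gives $a^2<a^2+b^2<c^2$, hence $0<a/c<1$ and likewise $0<b/c<1$; therefore $h(x)=(a/c)^x+(b/c)^x$ has $h'(x)=(a/c)^x\ln(a/c)+(b/c)^x\ln(b/c)<0$ and is strictly decreasing on all of $\mathbb{R}$; and since $h(2)=(a^2+b^2)/c^2<1$ by hypothesis, strict decrease forces $h(x)<1$ for every $x>2$, which is exactly the nonexistence claim. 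Your closing remark about Lemma 3.2 is also accurate and worth keeping: under $a<b<c<(a^2+b^2)^{1/2}$ the same ratios lie in $(0,1)$, so $h$ is again strictly decreasing, but now $h(2)>1$ while $h(x)\to 0$ as $x\to\infty$, and the intermediate value theorem plus monotonicity give existence and uniqueness of the root with $x>2$. In other words, your single normalization argument proves both unproved lemmas of Section 3 simultaneously; the only hypothesis doing real work, as you correctly flag, is that the strict inequality on $\sqrt{a^2+b^2}$ versus $c$ determines on which side of $1$ the decreasing function $h$ starts at $x=2$.
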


\begin{proposition}
The equation $x^\alpha+y^\alpha=z^\alpha$  in variables $x, y, z, \alpha$ where $x, y, z$ belong to positive integers and $\alpha > 2$ belongs to $\mathbb{R}$, has a solution if and only if $(x, y, z) \in F$. 
\end{proposition}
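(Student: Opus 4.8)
The plan is to reduce the whole statement to locating the root of a single strictly monotone function and then to cite the two preceding lemmas for the two directions of the equivalence. Since $x,y,z$ are positive integers, dividing by $z^t$ shows that $x^t+y^t=z^t$ is equivalent to $g(t)=1$, where $g(t)=(x/z)^t+(y/z)^t$. Both legs $x/z$ and $y/z$ will turn out to lie in $(0,1)$, so each term $(x/z)^t$ and $(y/z)^t$ is strictly decreasing in $t$; hence $g$ is strictly decreasing, with $g(0)=2$ and $g(t)\to 0$ as $t\to\infty$. Consequently $g(t)=1$ has at most one real root, which is the fact that makes the equivalence clean.

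For the \emph{if} direction, suppose $(x,y,z)\in F$, so that $x<y<z<(x^2+y^2)^{1/2}$. This is precisely the hypothesis of Lemma 3.2, which I would invoke directly: it guarantees a (unique) real exponent $\alpha>2$ with $x^\alpha+y^\alpha=z^\alpha$. No further work is needed here.

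For the \emph{only if} direction, assume the equation has a solution $\alpha>2$. First I would fix the ordering: because $z^\alpha=x^\alpha+y^\alpha$ is a sum of two positive terms and $\alpha>0$, we get $z>x$ and $z>y$, so $z$ is the largest leg and, after relabeling, $x\le y<z$. In particular $x/z,y/z\in(0,1)$, justifying the monotonicity of $g$ claimed above. Now $g(\alpha)=1$ and $2<\alpha$, so strict monotonicity gives $g(2)>g(\alpha)=1$, i.e. $(x/z)^2+(y/z)^2>1$, i.e. $z^2<x^2+y^2$, which is exactly $z<(x^2+y^2)^{1/2}$. Together with $x\le y<z$ this places $(x,y,z)$ in $F$. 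Alternatively, one can run this as a contrapositive: if $z>(x^2+y^2)^{1/2}$ then Lemma 3.3 forbids any real solution, contradicting the existence of $\alpha$.

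The step I expect to be the main obstacle is the borderline Pythagorean configuration $z^2=x^2+y^2$, which is covered by neither lemma: Lemma 3.2 assumes $z<(x^2+y^2)^{1/2}$ and Lemma 3.3 assumes $z>(x^2+y^2)^{1/2}$. In that case $t=2$ is itself a root of $x^t+y^t-z^t$, and the only way to exclude a second root at some $\alpha>2$ is the strict monotonicity of $g$, the very mechanism underlying the uniqueness clause of Lemma 3.2; this forces the sole root to be $t=2$ and contradicts $\alpha>2$. A secondary subtlety is the strict inequality $x<y$ in the definition of $F$: a solution with equal legs, such as $x=y=3$, $z=4$ (which satisfies the equation at $\alpha=\log 2/\log(4/3)>2$), meets every condition except $x<y$, so I would either read $F$ with $x\le y$ or restrict attention to triples with distinct legs.
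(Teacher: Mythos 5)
Your proof is correct, and it takes the route the paper plainly intends but never writes down: in the paper this proposition appears immediately after Lemmas 3.2 and 3.3 with no proof whatsoever, evidently offered as their immediate corollary. Your write-up supplies two things that a bare citation of the lemmas cannot. First, the two lemmas do not exhaust the cases: the borderline Pythagorean configuration $z^2=x^2+y^2$ satisfies the hypothesis of neither (Lemma 3.2 needs $z<(x^2+y^2)^{1/2}$, Lemma 3.3 needs $z>(x^2+y^2)^{1/2}$), and your strict monotonicity of $g(t)=(x/z)^t+(y/z)^t$ is exactly what rules it out: if $t=2$ is a root it is the \emph{only} root, so no $\alpha>2$ can exist. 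Second, you caught a genuine defect in the statement itself: with $F$ defined by the strict chain $a<b<c$, the ``only if'' direction is literally false, since $x=y=3$, $z=4$ solves the equation at $\alpha=\log 2/\log(4/3)\approx 2.41>2$ yet $(3,3,4)\notin F$; similarly, a solution listed with $x>y$ must first be relabeled before it can lie in $F$. The paper is silent on both points, so your amendment (read $a\le b$ in the definition of $F$, or restrict to triples with distinct legs) is needed to make the proposition true as stated. In short: same skeleton as the paper --- each direction reduced to one of the two lemmas --- but your monotonicity argument is what actually closes the equivalence, and in doing so it exposes a flaw in the paper's formulation that a pure appeal to the lemmas would have hidden.
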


What is an arithmetical nature $\alpha$? If there is a set of the positive integers $(x, y, z)$ with $x<y<z<(x^2+ y^2 )^\frac{1}{2}$ such that the equalities: $x^\alpha+y^\alpha=z^\alpha$, where $\alpha$ is some real algebraic integer, or $x^e+y^e=z^e$, where $e$ is Napier's constant, or $x^\pi+y^\pi=z^\pi$, where $\pi$ is a constant, the ratio of a circle's circumference to its diameter, could be true?

\begin{conjecture}
For any real algebraic integer $n > 2$, the equation  $x^n+y^n=z^n$ has no solution in positive integers (an extension of Fermat's last theorem).
\end{conjecture}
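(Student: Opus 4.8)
The plan is to argue by contradiction: suppose $x^n+y^n=z^n$ holds for positive integers $x,y,z$ and a real algebraic integer $n>2$, and separate the analysis according to the arithmetic nature of $n$. First I would normalise. Dividing out $\gcd(x,y,z)$ (which factors cleanly as $g^n$) I may assume $\gcd(x,y,z)=1$; since $x=y$ would force $2^{1/n}=z/x\in\mathbb{Q}$, impossible for $n>2$, the entries are distinct, and as $z$ is the largest I may take $x<y<z$. Proposition 3.4 then applies and places $(x,y,z)\in F$, so in particular $z<(x^2+y^2)^{1/2}$. This already excludes $x=1$ (it would force $z\le y$), which gives the crucial reduction that \emph{all three bases satisfy $x,y,z\ge 2$}, so that each is an algebraic number different from $0$ and $1$.

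Next I would dispose of the rational case, which is the only part that is elementary. A rational number is an algebraic integer precisely when it is a rational integer, so if $n\in\mathbb{Q}$ then $n\in\mathbb{Z}_{>2}$ and Fermat's last theorem (Wiles--Taylor) forbids a solution outright. Hence it suffices to treat $n$ an \emph{irrational} real algebraic number greater than $2$. Here the Gelfond--Schneider theorem enters: since each of $x,y,z$ is algebraic and $\neq 0,1$ while $n$ is algebraic and irrational, the powers $x^n,y^n,z^n$ are all transcendental. The entire difficulty is then packaged into the observation that $x^n+y^n-z^n=0$ is a forbidden \emph{algebraic} relation among three transcendental numbers of exponential type.

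To extract a contradiction I would assemble the six numbers $\log x,\log y,\log z,\,n\log x,n\log y,n\log z$. Assuming first that $x,y,z$ are multiplicatively independent, so that $\log x,\log y,\log z$ are linearly independent over $\mathbb{Q}$, Baker's theorem upgrades this to linear independence over $\overline{\mathbb{Q}}$; combined with the irrationality of $n$ (a putative relation $\sum(a_i+nb_i)\log(\cdot)=0$ with algebraic coefficients forces $a_i+nb_i=0$, hence $a_i=b_i=0$) this shows the six numbers above are themselves linearly independent over $\mathbb{Q}$. Applying Schanuel's conjecture to them forces the transcendence degree of the field they generate together with their exponentials $x,y,z,x^n,y^n,z^n$ to be at least $6$; since $x,y,z$ and $n$ are algebraic, the six transcendental generators reduce to $\log x,\log y,\log z,x^n,y^n,z^n$, which must then be algebraically independent over $\overline{\mathbb{Q}}$ --- directly contradicting $x^n+y^n-z^n=0$. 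The multiplicatively dependent case would be handled separately, by passing to a maximal independent subset of the logarithms and rerunning the argument with fewer Schanuel variables.

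The main obstacle is precisely the transcendence input in the last step. The unconditional tools --- Gelfond--Schneider, Baker's theorem, even the six exponentials theorem --- are strong enough to prove that the individual powers $x^n,y^n,z^n$ are transcendental, but none of them can rule out a single additive relation among three such numbers; doing so requires an algebraic-independence statement of Schanuel type, which remains unproven. This is exactly why the assertion should be recorded as a conjecture rather than a theorem: modulo Schanuel's conjecture the argument above is complete, yet an unconditional proof appears to lie beyond the reach of current transcendence theory.
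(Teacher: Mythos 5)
The statement you are trying to prove is recorded in the paper as a \emph{conjecture}: the paper offers no proof of it, and indeed positions it (via Proposition 3.4, the set $F$, and the Gelfond--Schneider theorem cited nearby) precisely as an open extension of Fermat's last theorem. Your proposal does not close that gap. Its preliminary reductions are sound: normalising by $\gcd(x,y,z)$, using Proposition 3.4 to force $(x,y,z)\in F$ and hence $x,y,z\ge 2$, disposing of rational $n$ by noting that a rational algebraic integer is a rational integer so that Wiles--Taylor applies, and invoking Gelfond--Schneider to make each of $x^n,y^n,z^n$ transcendental when $n$ is irrational. But the decisive step --- ruling out the single additive relation $x^n+y^n-z^n=0$ among these transcendental numbers --- is obtained only by applying Schanuel's conjecture, which is itself unproven. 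A conditional argument of the form ``Schanuel implies the conjecture'' is a legitimate observation, but it is not a proof of the statement, and you say so yourself in your closing paragraph. So the verdict is: genuine gap, and the gap is exactly the one the paper acknowledges by labelling the statement a conjecture.

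Two smaller points inside the conditional argument itself. First, the multiplicatively dependent case is not merely a matter of ``rerunning the argument with fewer Schanuel variables'': when the rank of the multiplicative group generated by $x,y,z$ drops to one, say $x=t^a$, $y=t^b$, $z=t^c$, the relation becomes a polynomial identity in the single transcendental $t^n$, and Gelfond--Schneider alone finishes it; when the rank is two you must first clear fractional exponents (raising to an integer power) before you obtain a genuine polynomial relation contradicting algebraic independence --- these cases need to be written out, since they need different tools. Second, your claim that $x=y$ is impossible because $2^{1/n}\notin\mathbb{Q}$ is immediate only for integer $n$; for irrational algebraic $n$ it again requires Gelfond--Schneider. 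Neither repair is difficult, but neither changes the essential point: without Schanuel's conjecture, or some unconditional algebraic-independence result that current transcendence theory does not possess, the statement remains exactly what the paper calls it --- a conjecture.
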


The conjecture is a statement on Diophantine transcendental equation. Here is the other example: 

\begin{proposition}
The equation  $x^{2+ip}+y^{2+iq}=z^{2+ir}$ in rational integer variables $(x, y, z)$, where $xyz \neq 0$; $x, y, z$ are relatively prime, where $2 + ip, 2 + iq, 2 + ir$ are Gaussian integers and $p, q, r$  $(pq \neq 0)$ are integers, has no solution in the positive rational integers. 
\end{proposition}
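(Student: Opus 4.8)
The plan is to collapse the single complex equation into one \emph{real} constraint by passing to absolute values, and then to defeat that constraint with a transcendence theorem. First I would put each term in polar form: for a positive integer $w$ and an integer $m$ one has $w^{2+im} = w^{2}e^{\,im\ln w}$, a complex number of modulus exactly $w^{2}$, because $|w^{im}| = |e^{im\ln w}| = 1$. In particular the modulus of the right-hand side is $z^{2}$, \emph{independently of $r$}, so the exponent $r$ never enters the argument at all.

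Writing $A = x^{2+ip}$, $B = y^{2+iq}$, $C = z^{2+ir}$, I would then apply $|A+B|^{2} = |A|^{2} + |B|^{2} + 2\,\mathrm{Re}(A\overline{B})$ to the hypothetical identity $A+B=C$. Since $\mathrm{Re}(A\overline{B}) = x^{2}y^{2}\cos(p\ln x - q\ln y)$, this yields
\[
 z^{4} = x^{4} + y^{4} + 2x^{2}y^{2}\cos(\ln s), \qquad s := x^{p}y^{-q},
\]
a positive rational number. Rearranging, any solution forces
\[
 \cos(\ln s) = \frac{z^{4}-x^{4}-y^{4}}{2x^{2}y^{2}} =: \rho \in \mathbb{Q}.
\]
Everything now reduces to showing that $\cos(\ln s)$ cannot equal the rational number $\rho$.

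The core case is $s \neq 1$. Here $s$ is a positive rational different from $0$ and $1$, while $i$ is algebraic and irrational, so the Gelfond--Schneider theorem makes $s^{i} = e^{\,i\ln s}$ transcendental. Were $\cos(\ln s)$ algebraic, then from $s^{i}s^{-i}=1$ and $s^{i}+s^{-i}=2\cos(\ln s)$ the number $s^{i}$ would be a root of the monic quadratic $w^{2} - 2\cos(\ln s)\,w + 1$ with algebraic coefficients, hence itself algebraic --- contradicting Gelfond--Schneider. Thus $\cos(\ln s)$ is transcendental, in particular irrational, and cannot equal $\rho$. The degenerate case $s=1$ is handled separately: then $x^{p}=y^{q}$, and since $p,q\neq 0$ (this is exactly where the hypothesis $pq\neq 0$ is used) this proportionality of prime-exponent vectors forces $x$ and $y$ to be powers of a single common base $m$, while the modulus identity collapses to $z^{2}=x^{2}+y^{2}$. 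Writing $z^{2}=m^{2a}+m^{2b}$ and factoring out the smaller power reduces the question to a number of the form $N^{2}+1$ being a perfect square, which never happens; hence this branch dies on its own. (Were $p=q=0$ allowed, this branch would instead recover the genuine Pythagorean solutions of $x^{2}+y^{2}=z^{2}$, which is precisely why $pq\neq 0$ must be imposed.)

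The main obstacle is conceptual rather than computational: one must identify the invariant that simultaneously strips away the free exponent $r$ and the two separate phases, and the modulus $|\cdot|^{2}$ is exactly that invariant, after which the whole problem becomes the single transcendence statement $\cos(\ln s)\notin\mathbb{Q}$. The only point demanding care is the $s=1$ branch together with the precise reading of ``relatively prime'': under pairwise coprimality $x^{p}=y^{q}$ collapses at once to $x=y=1$ and $z^{2}=2$, whereas under mere joint coprimality one must run the common-base factorization sketched above. I do not expect to invoke the earlier real-exponent lemmas, since passing to moduli already linearizes the obstruction and delivers the contradiction directly.
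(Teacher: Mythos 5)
Your proof is correct and follows essentially the same route as the paper: passing to $|A+B|^{2}$ is exactly multiplying the equation by its conjugate, i.e.\ the paper's statement (1) that conjugation $\Phi$ acts only on the Gaussian exponents when the bases are positive integers, and the transcendence contradiction is the paper's statement (2), the Gelfond--Schneider theorem, applied in the same way as in Zuehlke's argument that the paper cites. You additionally work out in full the degenerate branch $x^{p}=y^{q}$ with $z^{2}=x^{2}+y^{2}$, which the paper's sketch leaves implicit.
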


The proof of the proposition is based on two statements:

\begin{enumerate}

\item $\Phi(a^u+b^v)=a^{\Phi(u)} + b^{\Phi(v)}$ , where $a, b$ are positive integers, $u, v$ are Gaussian integers and $\Phi$ is a conjugate operator of the complex numbers.

\item Theorem (Gelfond - Schneider). Let $\alpha$ be an algebraic number not equal to 0 or 1, and $\beta$ an algebraic number that is not rational. Then  $\alpha^\beta$  is a transcendental number, \cite{Baker:1979aa}.

\end{enumerate}

In the first such an approach for Gaussian integer exponents was offered and realized by Zuehlke, in 1999 \cite{Zuehlke:1999aa} . We would like to note that his approach is not limited only to $\mathbb{Z}[i]$, here is an example: 

\begin{proposition}
Let the system of equations in rational integer variables $x, y, z$ with algebraic integer exponents $u, v, w$ belonging to the ring $\mathbb{Z}[\sqrt{2}]$ have the form:

\begin{eqnarray*} 
ax^u  - by^v=cz^w \\
ax^{\Phi(u)}+by^{\Phi(v)}=cz^{\Phi(w)}
\end{eqnarray*}

where $xyz \neq 0, 1$ and $\gcd(x, y, z) = 1$;  $a, b, c, d$ are positive rational integers relatively primes in pairs and $ab \equiv mod 2$; $\Phi$ is a conjugate operator in $\mathbb{Z}[\sqrt{2}]$. Let $u + \Phi(u), v + \Phi(v)$ and $w + \Phi(w)$ be the positive rational integers and $(u - \Phi(u))(v - \Phi(v)) \neq 0$ then the system of equations has no solutions in positive integers. 
\end{proposition}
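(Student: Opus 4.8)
The plan is to adapt Zuehlke's conjugation technique to $\mathbb{Z}[\sqrt2]$, collapsing the two given equations into a single identity whose only non-rational ingredient is a Gelfond--Schneider power, and then to play the rationality forced by that identity against the transcendence supplied by the Gelfond--Schneider theorem. First I would write $u = m_1 + n_1\sqrt2$, $v = m_2 + n_2\sqrt2$, $w = m_3 + n_3\sqrt2$ with all $m_i, n_i \in \mathbb{Z}$; the hypothesis that $u+\Phi(u)$, $v+\Phi(v)$, $w+\Phi(w)$ are positive rational integers gives $m_i > 0$, while $(u-\Phi(u))(v-\Phi(v)) \neq 0$ gives $n_1, n_2 \neq 0$. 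Setting the positive integers $A = a x^{m_1}$, $B = b y^{m_2}$, $C = c z^{m_3}$ and the positive reals $P = (x^{n_1})^{\sqrt2}$, $Q = (y^{n_2})^{\sqrt2}$, $S = (z^{n_3})^{\sqrt2}$, and using $x^{\Phi(u)} = x^{m_1}(x^{n_1})^{-\sqrt2}$ and its analogues, the system takes the clean form $AP - BQ = CS$ and $AP^{-1} + BQ^{-1} = CS^{-1}$.

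Next I would multiply the two equations. The product $CS\cdot CS^{-1}$ on the right collapses to $C^2$, while the cross terms on the left combine into
\[ C^2 = A^2 - B^2 + AB\left(\tfrac{P}{Q} - \tfrac{Q}{P}\right), \]
where $\tfrac{P}{Q} = (x^{n_1}/y^{n_2})^{\sqrt2} =: \rho$. Hence $\rho - \rho^{-1} = (C^2 - A^2 + B^2)/(AB)$ is rational, so $\rho$ satisfies $\rho^2 - r\rho - 1 = 0$ for some $r \in \mathbb{Q}$ and is therefore algebraic. In the principal case $x^{n_1} \neq y^{n_2}$ the base $x^{n_1}/y^{n_2}$ is a rational number distinct from $0$ and $1$ and $\sqrt2$ is algebraic irrational, so Gelfond--Schneider forces $\rho$ to be transcendental, a contradiction. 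This eliminates every solution with $x^{n_1} \neq y^{n_2}$.

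The hard part will be the degenerate case $x^{n_1} = y^{n_2}$, where $\rho = 1$ and the identity above degenerates to the harmlessly consistent $C^2 = A^2 - B^2$. Here $P = Q$, and the system reduces to $(A-B)P = CS$ and $(A+B)P^{-1} = CS^{-1}$; multiplying reproduces $A^2 - B^2 = C^2$ (so $A > B$), while dividing yields $S^2/P^2 = (A-B)/(A+B)$, that is $(z^{2n_3}/x^{2n_1})^{\sqrt2} = (A-B)/(A+B) \in \mathbb{Q}$. A second application of Gelfond--Schneider then forces $z^{2n_3} = x^{2n_1}$, the case $n_3 = 0$ being pushed back to $x = 1$, whence $y = 1$ and the original equations give $a - b = a + b$, i.e. $b = 0$, which is absurd. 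Combined with $x^{n_1} = y^{n_2}$ this makes $x^{2n_1}, y^{2n_2}, z^{2n_3}$ equal to a common positive rational $k$, and a prime-factorization argument shows that any prime dividing $k$ (or $1/k$) would divide all of $x, y, z$, contradicting $\gcd(x,y,z)=1$ unless $k = 1$; but $k = 1$ forces $x = y = z = 1$, contradicting $xyz \neq 1$. I expect this degenerate branch, and in particular the task of pinning down exactly which coprimality and nondegeneracy hypotheses are needed to close it, to be the main obstacle, the principal case being an essentially immediate Gelfond--Schneider argument.
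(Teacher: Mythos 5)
Your proposal is correct and takes exactly the route the paper intends: Zuehlke's conjugation technique (pairing each equation with its $\Phi$-conjugate, multiplying, and reducing to the rationality of a power $\alpha^{\sqrt{2}}$) combined with the Gelfond--Schneider theorem, which is all the paper itself offers, since its ``proof'' is the single sentence that it ``uses the similar concepts with some modifications'' as Proposition 3.5. In fact your writeup --- in particular the careful handling of the degenerate branch $x^{n_1}=y^{n_2}$, where the hypotheses $\gcd(x,y,z)=1$ and $xyz\neq 1$ are actually needed --- is substantially more complete than anything the paper provides.
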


\begin{proof}
The proof of proposition 3.7 uses the similar concepts with some modifications.
\end{proof}

\section{Discussion and Conclusion}

Fermat, Euler and Legendre saw the Fermat problem as a statement on Diophantine equations and their properties. The Euler observation (1753) about the proofs of Fermat's theorem for the equations $x^3+y^3=z^3$  and $x^4+y^4  = z^4$  permitted to find the real obstacles which have to overcome anyhow solving both the partial cases and the general case of the Fermat equation.

Attracting Legendre's equations (1785) for discussing the Fermat problem (1637) we returned to the Abel interpretation (1823) of Fermat's last theorem as the problem of rational integers of a special form, in our case, on the Legendre curves.

Anyway the Abel approach \cite{Abel:1839aa} as the final result was realized in the proof of Fermat's last theorem as the conflict between the integral solutions and the unique properties of elliptical curves \cite{Frey:aa}. Indeed, it takes place the following statement:

\begin{theorem}
If there exist $u=a^n,v=b^n, w=c^n$, $\gcd(a,b)=\gcd(a,c)=\gcd(b,c)=1$ where $n$ is prime, $n>3$ and $u + v =w$, then an elliptical curve  $\mathbb{E}_{a, b, c}:y^2=x(x-a^n)(x+b^n)$  is not modular.
\end{theorem}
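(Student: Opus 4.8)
The plan is to argue by contradiction, following the Frey--Serre--Ribet strategy: assuming $\mathbb{E}_{a,b,c}$ is modular, I would show that its mod-$n$ Galois representation descends to an impossibly small level. \textbf{Step one: arithmetic of the Frey curve.} First I would put $\mathbb{E}_{a,b,c}:y^2=x(x-a^n)(x+b^n)$ into a minimal Weierstrass model. Using the hypothesis $u+v=w$, i.e.\ $a^n+b^n=c^n$, together with the pairwise coprimality of $a,b,c$ and a parity normalization (reordering so that $b^n\equiv 3\bmod 4$ while $a^n$ is even), one computes the minimal discriminant to be $\Delta = 2^{-8}(abc)^{2n}$ and finds that $\mathbb{E}_{a,b,c}$ is \emph{semistable}, with conductor equal to the radical $\prod_{\ell\mid abc}\ell$. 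These are exactly the hypotheses needed downstream.

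\textbf{Step two: the mod-$n$ representation and its ramification.} Let $\bar\rho=\bar\rho_{\mathbb{E},n}\colon \mathrm{Gal}(\bar{\mathbb{Q}}/\mathbb{Q})\to \mathrm{GL}_2(\mathbb{F}_n)$ be the representation on the $n$-torsion. Because the curve is semistable and $n>3$ is prime (so $n\geq 5$), Mazur's theorem gives that $\bar\rho$ is \emph{irreducible}. The key point is that for every odd prime $\ell\mid abc$ one has $n\mid \mathrm{ord}_\ell(\Delta)$, so $\bar\rho$ is finite (unramified after removing the $n$-part) at each such $\ell$; combined with the behaviour at $2$ and at $n$, the prime-to-$n$ Serre conductor of $\bar\rho$ collapses to exactly $2$, and its Serre weight is $2$.

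\textbf{Step three: level lowering and the empty space.} Assume now that $\mathbb{E}_{a,b,c}$ is modular. Then $\bar\rho$ is modular of some level dividing $\prod_{\ell\mid abc}\ell$. Applying Ribet's level-lowering theorem (Serre's $\varepsilon$-conjecture), I would strip away successively every odd prime $\ell\mid abc$ at which $\bar\rho$ is finite, concluding that $\bar\rho$ arises from a weight-$2$ cusp form on $\Gamma_0(2)$. But $X_0(2)$ has genus $0$, so $S_2(\Gamma_0(2))=0$: there is no such form. This contradiction shows that $\mathbb{E}_{a,b,c}$ cannot be modular.

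The hard part will be \emph{Step three}, since Ribet's level-lowering theorem is itself a deep result, and its application depends on the precise ramification and irreducibility data assembled in Steps one and two — in particular on verifying that the prime-to-$n$ conductor really drops all the way to $2$. The semistability of $\mathbb{E}_{a,b,c}$ and the divisibility $n\mid \mathrm{ord}_\ell(\Delta)$ at the odd primes are the technical linchpins that force the level down to the (empty) level $2$; everything else is the bookkeeping of Tate curves and minimal models.
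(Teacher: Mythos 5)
Your proposal is correct: it is the standard Frey--Serre--Ribet argument (semistability and the discriminant computation, irreducibility of the mod-$n$ representation via Mazur, then Ribet's level lowering down to $\Gamma_0(2)$ where no weight-$2$ cusp forms exist), which is exactly the approach this theorem rests on. The paper itself offers no proof beyond citing Frey's work, so your outline supplies precisely the argument the paper implicitly invokes.
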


In our note we have discussed the number of cases of applying of Fermat's last theorem and its generalizations proved to solving some problems from algebra, geometry and number theory as well as represented the various points of view on the Fermat problem.

\bibliographystyle{IEEEtran}
\bibliography{references}

\end{document}